\numberwithin{equation}{section}
 \newtheorem{theorem}{Theorem}[section]
\theoremstyle{definition}
\theoremstyle{ex}
\newtheorem{ex}{Example}[section]
\newcommand{\e}{\end{document}}
\begin{document}

\thispagestyle{empty}

\author{
{{\bf Abdelfattah Mustafa, B. S. El-Desouky and Shamsan AL-Garash}
\newline{\it{{}  }}
 } { }\vspace{.2cm}\\
 \small \it Department of Mathematics, Faculty of Science, Mansoura University, Mansoura 35516, Egypt
}

\title{The Marshall-Olkin Flexible Weibull Extension Distribution}

\date{}

\maketitle
\small \pagestyle{myheadings}
        \markboth{{\scriptsize The Marshall-Olkin Flexible Weibull Extension}}
        {{\scriptsize {Abdelfattah Mustafa, B. S. El-Desouky and Shamsan AL-Garash}}}

\hrule \vskip 8pt

\begin{abstract}
This paper introduces a new generalization of the flexible Weibull distribution with three parameters this model called the Marshall-Olkin flexible Weibull extension (MO-FWE) distribution which exhibits bathtub-shaped hazard rate. We studied it's statistical properties include, quantile function skewness and kurtosis, the mode, $rth$ moments and moment generating function and order statistics. We used the method of maximum likelihood   for estimating the model parameters and the observed Fisher's information matrix is derived. We illustrate the usefulness of the proposed model by applications to real data.
\end{abstract}

\noindent
{\bf Keywords:}
{\it Weibull distribution; flexible Weibull extension distribution; Marshall-Olkin flexible Weibull; maximum likelihood estimation.}

\noindent

\section{Introduction}
The Weibull distribution (WD) introduced by Weibull \cite{Weibull1951}, is a popular distribution for modeling lifetime data where the hazard rate function is monotone. Recently appeared new classes of distributions were based on modifications of the Weibull distribution (WD) to provide a good fit to data set with bathtub hazard failure rate Xie and Lai \cite{XieandLai1995}. Among of these, modified Weibull distribution (MWD), Lai et al. \cite{Laietal2003} and Sarhan and Zaindin \cite{SarhanandZaindin2009}, moreover the beta-Weibull distribution (BWD) has been derived by Famoye et al. \cite{Famoyeetal2005}, beta modified Weibull distribution (BMWD), Silva et al. \cite{Silvaetal2010} and Nadarajah et al. \cite{Nadarajahetal2011}. Recently, there are many generalization of the WD like a Kumaraswamy Weibull  distribution (KWD), Cordeiro et al. \cite{Cordeiroetal2010}, generalized modified Weibull  distribution (GMWD), Carrasco et al. \cite{Carrascoetal2008} and exponentiated modified Weibull extension  distribution (EMWED), Sarhan and Apaloo \cite{SarhanandApaloo2013}.

\noindent
The flexible Weibull  distribution (FWED), Bebbington et al. \cite{Bebbingtonetal2007} has a wide range of applications including life testing experiments, reliability analysis, applied statistics and clinical studies. The origin and other aspects of this distribution can be found in \cite{Bebbingtonetal2007}.

\noindent
A random variable $X$ is said to have the flexible Weibull Extension distribution with parameters $\alpha , \beta >0$ if it's probability density function (pdf) is given by

\begin{equation} \label{eq1.1}
g(x)=\left(\alpha +\frac {\beta}{x^2}\right) e^{\alpha x -\frac {\beta }{x}}
\exp\left\{-e^{\alpha x -\frac{\beta }{x}}\right\},  \quad x>0,
\end{equation}

\noindent
while the cumulative distribution function (cdf) is given by

\begin{equation} \label{eq1.2}
G(x)=1-\exp\left\{-e^{\alpha x -\frac{\beta }{x}}\right\},  \quad x>0.
\end{equation}

\noindent
The survival function is given by the equation

\begin{equation} \label{eq1.3}
S(x)=\exp\left\{-e^{\alpha x -\frac{\beta }{x}}\right\},  \quad x>0,
\end{equation}
the hazard rate function is
\begin{equation} \label{eq1.4}
h(x)=\left(\alpha +\frac{\beta }{x^2} \right) e^{\alpha x -\frac{\beta }{x}},
\end{equation}
and the reversed hazard rate function is
\begin{equation} \label{eq1.5}
r(x)=\frac{\left(\alpha +\frac {\beta}{x^2}\right)e^{\alpha x -\frac {\beta }{x}} \exp\left\{-e^{\alpha x -\frac{\beta }{x}}\right\}}{1-\exp\left\{-e^{\alpha x -\frac{\beta }{x}}\right\}}.
\end{equation}

\noindent
In this paper we present a new generalization of the flexible Weibull extension distribution called the Marshall-Olkin flexible Weibull extension  distribution (MO-FWED). By using the Marshall and Olkin's \cite{Marshall1997} method for adding a new parameter to an existing distribution, this new generalized referred to as the Marshall-Olkin flexible Weibull extension distribution.

\noindent
Marshall and Olkin \cite{Marshall1997} proposed a new family of distributions called the Marshall-Olkin extended (MOE) family by adding a new parameter to the baseline distribution. They defined a new survival function $S_{MO}(x)$ by introducing the additional shape parameter $\theta $ such that $\theta >0$ and $\bar{\theta}=1-\theta $. Marshall and Olkin called the parameter $\theta $ the {\em tilt parameter} and they interpreted $\theta $ in terms of the behavior of the hazard rate function of $S_{MO}(x)$. Their ratio is increasing in $x$ for $\theta \geq 1$ and decreasing in $x$ for $0<\theta <1$.
They consider for any arbitrary continuous distribution called {\em baseline distribution} having cumulative distribution function $G(x,\varphi )$ with the related probability density function pdf $g(x,\varphi )$, then the cumulative distribution function of the Marshall Olkin (MO) family of distribution is given by

\begin{equation} \label{eq1.6}
F_{MO}(x)=\frac{G(x,\varphi )}{1-\bar{\theta }S(x,\varphi )}, \quad -\infty <x <\infty ,
\end{equation}

\noindent
where $\theta >0$ and $\bar{\theta}=1-\theta $. \\
The probability density function corresponding to Eq.(\ref{eq1.6}) becomes

\begin{equation} \label{eq1.7}
f_{MO}(x)=\frac{\theta g(x,\varphi )}{\left[1-\bar{\theta }S(x,\varphi )\right]^2}, \quad -\infty <x <\infty.
\end{equation}

\noindent
The survival function, hazard rate function, reversed hazard rate function and cumulative hazard rate function of the Marshall-Olkin (MO) family of a probability distribution are given by

\begin{subequations} \label{eq1.8}
\begin{eqnarray}
S_{MO}(x) &=&  \frac{\theta S(x,\varphi )}{1-\bar{\theta }S(x,\varphi )},
\\
h_{MO}(x) &=&  \frac{h(x,\varphi )}{1-\bar{\theta }S(x,\varphi )},
\\
r_{MO}(x) &=&  \frac{\theta r(x,\varphi )}{1-\bar{\theta }S(x,\varphi )},
\\
H_{MO}(x) &=&  -\log \left( S_{MO}(x)\right )=-\log \left( \frac{\theta S(x,\varphi )}{1-\bar{\theta }S(x,\varphi )}\right ),
\end{eqnarray}
\end{subequations}
respectively, where $\theta >0$, $\bar{\theta}=1-\theta $. \\

\noindent
This paper is organized as follows, we define the cumulative distribution, probability density and hazard functions of the Marshall-Olkin flexible Weibull extension distribution (MO-FWED) in Section 2. In Sections 3 and 4, we introduced the statistical properties include, quantile function, the mode,  skewness and kurtosis, $rth$ moments and moment generating function. The distribution of the order statistics is expressed in Section 5. The maximum likelihood estimation of the parameters is determined in Section 6. Real data sets are analyzed in Section 7 and the results are compared with existing distributions. The conclusions are introduced in Section 8.


\section{Marshall-Olkin Flexible Weibull Extension Distribution}

In this section, we studied the three parameters Marshall-Olkin flexible Weibull extension distribution.
Substituting from  Eqs. (\ref{eq1.2}) and (\ref{eq1.3}) into Eq. (\ref{eq1.6}), the cumulative distribution function of the Marshall-Olkin flexible Weibull extension distribution (MO-FWE) is given by
\begin{equation} \label{eq2.1}
F(x;\alpha ,\beta, \theta  ) =\frac {1-e^{-e^{\alpha x-\frac{\beta }{x}}}}{1-(1-\theta) e^{-e^{\alpha x-\frac{\beta }{x}}}}, \;  x>0, \;  \alpha, \beta, \theta >0.
\end{equation}

\noindent
Substituting from  Eqs. (\ref{eq1.1}) and (\ref{eq1.3}) in Eq. (\ref{eq1.7}), the pdf corresponding to Eq. (\ref{eq2.1}) is given by
\begin{equation} \label{eq2.2}
f(x;\alpha ,\beta,\theta  ) =\frac {\theta \left(\alpha +\frac {\beta}{x^2}\right)e^{\alpha x -\frac {\beta }{x}} e^{-e^{\alpha x -\frac{\beta }{x}}}}{\left [1-(1-\theta ) e^{-e^{\alpha x-\frac{\beta }{x}}}\right ]^2}, \; x>0, \;  \alpha, \beta, \theta >0.
\end{equation}

\noindent
The survival function, hazard rate function, reversed-hazard rate function and cumulative hazard rate function of $X\sim$ MO-FWED($\alpha, \beta, \theta$) are given by
\begin{subequations}\label{eq2.3}
\begin{eqnarray}
S(x;\alpha, \beta, \theta) &=&   \frac{\theta e^{-e^{\alpha x-\frac{\beta }{x}}}}{1-(1-\theta )e^{-e^{\alpha x-\frac{\beta }{x}}}},
\\
h(x;\alpha ,\beta, \theta   ) &=&  \frac{\left(\alpha +\frac {\beta}{x^2}\right) e^{\alpha x -\frac {\beta }{x}}}{1-(1-\theta )e^{-e^{\alpha x-\frac{\beta }{x}}}},
\\
r(x;\alpha ,\beta, \theta  ) &=&  \frac {\left( \alpha +\frac {\beta}{x^2}\right) e^{\alpha x -\frac {\beta }{x}} e^{-e^{\alpha x-\frac{\beta }{x}}}}{\left ( 1- e^{-e^{\alpha x-\frac{\beta }{x}}}\right ) \left [1-(1-\theta )e^{-e^{\alpha x-\frac{\beta }{x}}}\right ]},
\\
H(x;\alpha ,\beta, \theta  ) &=&  -\log \left (\frac{\theta e^{-e^{\alpha x-\frac{\beta }{x}}}}{1-(1-\theta )e^{-e^{\alpha x-\frac{\beta }{x}}}}\right)
\end{eqnarray}
\end{subequations}

\noindent
respectively, $x>0$ and $\alpha, \beta, \theta  >0$.

\noindent
Figures (1--6) display the cdf, pdf, survival function, hazard rate function, reversed hazard rate function and cumulative hazard rate function of the MO-FWED($\alpha$, $\beta$, $\theta  $) for some parameter values.

\begin{center}
\includegraphics[scale=0.7]{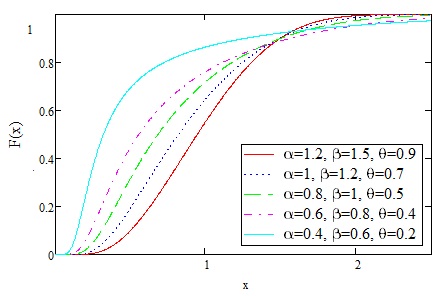}\\
Figure 1: The cdf of MO-FWED for different values of parameters.
\end{center}

\vspace{0.4 cm}

\begin{center}
\includegraphics[scale=0.7]{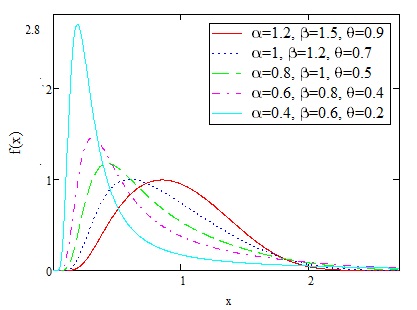}\\
Figure 2: The pdf of MO-FWED for different values of parameters.
\end{center}

\vspace{0.4 cm}

\begin{center}
\includegraphics[scale=0.7]{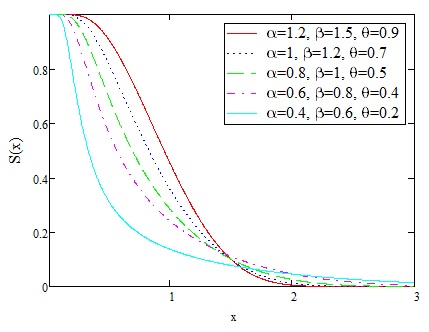}\\
Figure 3: The survival function of MO-FWED for different values of parameters.
\end{center}

\vspace{0.4 cm}

\begin{center}
\includegraphics[scale=0.7]{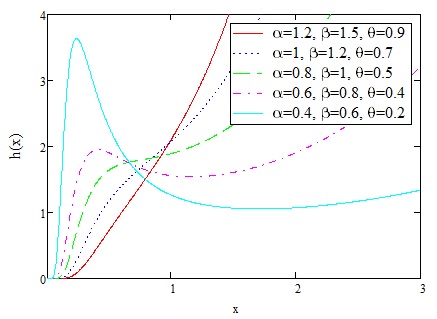}\\
Figure 4: The hazard rate function of MO-FWED for different values of parameters.
\end{center}

\vspace{0.4 cm}

\begin{center}
\includegraphics[scale=0.7]{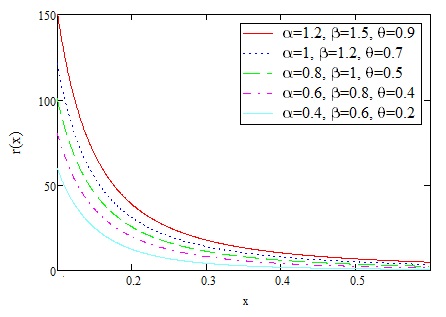}\\
Figure 5: The reversed hazard rate function of MO-FWED  for different values of parameters.
\end{center}

\vspace{0.4 cm}

\begin{center}
\includegraphics[scale=0.7]{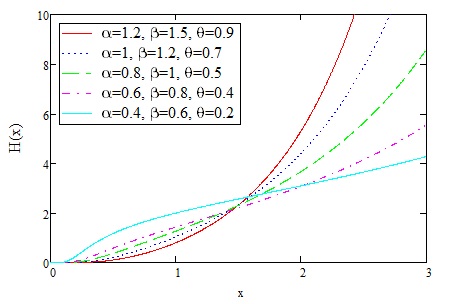}\\
Figure 6: The cumulative hazard rate function of MO-FWED for different values of parameters.
\end{center}

\section{Statistical Properties}

In this section, we study the statistical properties for the MO-FWE distribution, specially quantile and simulation, median, skewness, kurtosis and moments.

\subsection{Quantile and simulation}
The quantile $x_q$ of the MO-FWE($\alpha,\beta, \theta  $)  random variable is given by
\begin{eqnarray} \label{eq3.1}
F(x_q; \alpha,\beta, \theta)&=& q, \quad 0<q<1.
\end{eqnarray}

\noindent
Using the  cumulative distribution function of the MO-FWE distribution, from (\ref{eq2.1}) in Eq. (\ref{eq3.1}), we have
\begin{equation} \label{eq3.2}
\alpha x_q^2-k(q) x_q-\beta=0,
\end{equation}
where
\begin{equation} \label{eq3.3}
k(q)=\ln\left[-\ln \left(\frac{1-q }{1-(1-\theta )q} \right) \right].
\end{equation}

\noindent
So, the simulation of the MO-FWE random variable is straightforward. Let $U$ be a uniform variate on the unit interval $(0,1)$,  thus, by means of the inverse transformation method, we consider the random variable $X$ given by
\begin{equation} \label{eq3.4}
X=\frac{k(u)\pm\sqrt{k(u)^2+4\alpha \beta}}{2\alpha}.
\end{equation}

\noindent
Since the median is $50\%$ quantile then by setting $q=0.5$ in Eq. (\ref{eq3.2}), the median $M$ of the MO-FWED can be obtained the median .

\subsection{The Mode of MO-FWE}
In this subsection, we will derive the mode of the MO-FWED$(\alpha, \beta, \theta)$ by deriving its pdf with respect to $x$ and equal it to zero thus the mode of the MO-FWED$(\alpha , \beta , \theta )$ can be obtained as a nonnegative solution of the following nonlinear equation

\begin{equation} \label{eq3.5}
\left[1-(1-\theta) e^{-e^{\alpha x-\frac{\beta}{x}}} \right] \left[-2\beta x+\left(\alpha x^2+\beta \right)^2 \right]-
\left(\alpha x^2 +\beta \right)^2 \left[1+(1-\theta) e^{-e^{\alpha x-\frac{\beta}{x}}} \right]e^{\alpha x-\frac{\beta}{x}}
=0.
\end{equation}

\noindent
From Figure 2, the pdf for MO-FWED has only one peak, It is a unimodal distribution, so the above equation has only one solution. It is not possible to get an explicit solution of Eq.\ref{eq3.5} in the general case. Numerical methods should be used such as bisection or fixed-point method to solve it.

\subsection{The Skewness and Kurtosis}
The analysis of the variability Skewness and Kurtosis on the shape parameters $\alpha,\beta, \theta $  can be investigated based on quantile measures. The short comings of the classical Kurtosis measure are well-known. The Bowely's skewness based on quartiles is given by, Kenney and Keeping \cite{KenneyandKeeping1962},
\begin{equation} \label{eq3.6}
S_k=\frac{ q_{(0.75)} -2 q_{(0.5)}+q_{(0.25)}}{q_{(0.75)}- q_{(0.25)}},
\end{equation}

\noindent
and the Moors Kurtosis based on quantiles, Moors \cite{Moors1998},
\begin{equation} \label{eq3.7}
K_u=\frac{ q_{(0.875)} - q_{(0.625)}-q_{(0.375)}+ q_{(0.125)}}{q_{(0.75)}- q_{(0.25)}},
\end{equation}

\noindent
where $q_{(.)}$ represents quantile function.

\subsection{The Moments}
Now in this subsection, we derive the $rth$ moment for MO-FWED. Moments are important in any statistical analysis, especially in applications. It can be used to study the most important features and characteristics of  a distribution (e.g. tendency, dispersion, skewness and kurtosis).
\begin{theorem} \label{Th1}
If $X$ has MO-FWED $(\alpha,\beta, \theta)$, then the $r$th moments of random variable $X$, is given by the following
\begin{equation}  \label{eq3.8}
\mu_ r^{'}= \sum_{k=0}^{\infty}\sum_{j=0}^{\infty}\sum_{i=0}^{\infty}
\frac{(-1)^{i+j}(k+1)^{j+1} \beta ^i \theta (1-\theta )^k}{i! j! (j+1)^{r-2i-1} \alpha^{r-i-1} }   \left[\frac{\Gamma(r-i+1)}{\alpha (j+1)^2}+ \beta \Gamma(r-i-1)\right].
\end{equation}
\end{theorem}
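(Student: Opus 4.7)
The plan is to compute $\mu_r' = \int_0^\infty x^r f(x;\alpha,\beta,\theta)\,dx$ by substituting the density \eqref{eq2.2} and reducing the doubly-exponential factor and the bracketed denominator to three nested power series, after which each surviving $x$-integral is a Gamma-function moment. I would proceed in three successive expansions.

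First, I would apply the generalized binomial identity $(1-y)^{-2}=\sum_{k=0}^{\infty}(k+1)y^{k}$ with $y=(1-\theta)e^{-e^{\alpha x-\beta/x}}$, which lies in $[0,1)$ whenever $x>0$ and $\theta>0$, so the series converges pointwise. This turns the density into
\begin{equation*}
f(x)=\theta\Bigl(\alpha+\tfrac{\beta}{x^{2}}\Bigr)e^{\alpha x-\beta/x}\sum_{k=0}^{\infty}(k+1)(1-\theta)^{k}\,e^{-(k+1)e^{\alpha x-\beta/x}}.
\end{equation*}
Next, I would expand each inner doubly-exponential term by the Taylor series of $e^{-(k+1)u}$ evaluated at $u=e^{\alpha x-\beta/x}$, absorb the loose $e^{\alpha x-\beta/x}$ factor into the $j$th term, and write the resulting powers as $e^{(j+1)(\alpha x-\beta/x)}=e^{(j+1)\alpha x}e^{-(j+1)\beta/x}$. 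A third expansion, Taylor in $\beta/x$, rewrites $e^{-(j+1)\beta/x}=\sum_{i=0}^{\infty}\tfrac{(-1)^{i}(j+1)^{i}\beta^{i}}{i!\,x^{i}}$ and produces the third summation index.

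After formally interchanging the three sums with the outer integral, the remaining integrand decomposes into the two pieces $\alpha\,x^{r-i}$ and $\beta\,x^{r-i-2}$ multiplied by a single exponential, which I would evaluate by $\int_{0}^{\infty}x^{s-1}e^{-\lambda x}\,dx=\Gamma(s)/\lambda^{s}$ at rate $\lambda=(j+1)\alpha$, obtaining
\begin{equation*}
\frac{\alpha\,\Gamma(r-i+1)}{[(j+1)\alpha]^{r-i+1}}+\frac{\beta\,\Gamma(r-i-1)}{[(j+1)\alpha]^{r-i-1}}.
\end{equation*}
Factoring the common prefactor $(-1)^{i+j}(k+1)^{j+1}\beta^{i}\theta(1-\theta)^{k}/[i!\,j!\,(j+1)^{r-2i-1}\alpha^{r-i-1}]$ out of the two Gamma contributions collapses the $\alpha$-and-$(j+1)$ book-keeping exactly into the bracket $\Gamma(r-i+1)/[\alpha(j+1)^{2}]+\beta\Gamma(r-i-1)$ that appears in \eqref{eq3.8}.

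The main obstacle is that each individual term produced in the second expansion carries the factor $e^{(j+1)\alpha x}$, which diverges at infinity, so the term-by-term integration and the sum/integral interchange are only formally justified; the original integrand is integrable only because of the massive cancellations among the signed terms of the Taylor series for $e^{-e^{\alpha x-\beta/x}}$. A rigorous treatment would either perform the substitution $u=e^{\alpha x-\beta/x}$, $du=(\alpha+\beta/x^{2})u\,dx$ to reduce the problem to $\int_{0}^{\infty}x(u)^{r}e^{-u}[1-(1-\theta)e^{-u}]^{-2}du$ and expand the bracket first, or appeal to an alternating-partial-sum/dominated-convergence argument on bounded intervals. I would present the manipulation in the standard formal style used throughout the Marshall--Olkin moment literature, since the end expression matches the moment numerically and is the form used by the subsequent sections.
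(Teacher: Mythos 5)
Your proposal follows essentially the same route as the paper: the same geometric-series expansion of $\left[1-(1-\theta)e^{-e^{\alpha x-\beta/x}}\right]^{-2}$, then the Taylor expansions of $e^{-(k+1)e^{\alpha x-\beta/x}}$ and $e^{-(j+1)\beta/x}$, followed by term-by-term Gamma-type integration, so it reproduces the paper's derivation of \eqref{eq3.8}. Your remark that the surviving factor $e^{(j+1)\alpha x}$ makes each individual integral divergent (so the interchange is only formal) is a fair and accurate caveat that the paper itself glosses over by citing a Gamma-function identity with the wrong sign in the exponent.
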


\begin{proof}
The $r$th moment of the random variable $X$ with probability density function $f(x)$ is given by
\begin{equation} \label{eq3.9}
\mu_ r^{'}=\int_0^\infty x^r f(x;\alpha ,\beta, \theta ) dx.
\end{equation}

\noindent
Substituting from Eq. (\ref{eq2.2}) into Eq. (\ref{eq3.9}) we get
\begin{eqnarray*}
\mu_ r^{'} &=& \int_0^\infty x^r \theta \left ( \alpha +\frac {\beta}{x^2}\right )e^{\alpha x -\frac {\beta }{x}} e^{-e^{\alpha x -\frac{\beta }{x}}} \left [1-(1-\theta ) e^{-e^{\alpha x-\frac{\beta }{x}}}\right ]^{-2} dx.
\end{eqnarray*}

\noindent
Since $0<(1-\theta ) e^{-e^{\alpha x-\frac{\beta }{x}}}<1$ for $x>0$ we can use the binomial series expansion of\\
$\left [1-(1-\theta ) e^{-e^{\alpha x-\frac{\beta }{x}}}\right ]^{-2}$ yields
\[
\left [1-(1-\theta ) e^{-e^{\alpha x-\frac{\beta }{x}}}\right ]^{-2}=\sum_{k=0}^{\infty} (k+1) (1-\theta )^k e^{-k e^{\alpha x -\frac{\beta }{x}}},
\]
then we get
\begin{equation*}
\mu_ r^{'}= \sum_{k=0}^{\infty} (k+1) \theta (1-\theta )^k \int_0^{\infty} x^r \left(\alpha  +\frac{\beta }{x^2}\right) e^{\alpha x - \frac{\beta }{x}} e^{-(k+1)e^{\alpha x - \frac{\beta }{x}}} dx,
\end{equation*}
\noindent
using series expansion of $e^{-(k+1) e^{\alpha x - \frac{\beta }{x}}}$,
\[
 e^{-(k+1) e^{\alpha x - \frac{\beta }{x}}} =\sum_{j=0}^{\infty}\frac{(-1)^j(k+1)^j}{j!} e^{j(\alpha x -\frac{\beta }{x})},
 \]
we obtain
\begin{equation*}
\mu_ r^{'} = \sum_{k=0}^{\infty}\sum_{j=0}^{\infty} \frac{(-1)^j(k+1)^{j+1} \theta (1-\theta )^k}{j!} \int_0^{\infty} x^r \left(\alpha  +\beta x^{-2}\right) e^{(j+1)\alpha  x} e^{-(j+1)\frac{\beta }{x}} dx,
\end{equation*}

\noindent
using series expansion of  $e^{-(j+1)\frac{\beta }{x}}$,
\[
e^{-(j+1)\frac{\beta }{x}}= \sum_{i=0}^{\infty}\frac{(-1)^i (j+1)^i \beta ^i }{i!} x^{-i},
\]
we obtain
\begin{eqnarray*}
\mu_ r^{'}&=& \sum_{k=0}^{\infty}\sum_{j=0}^{\infty}\sum_{i=0}^{\infty}
\frac{(-1)^{i+j}(k+1)^{j+1} (j+1)^i \beta ^i \theta (1-\theta )^k}{i! j!}  \int_0^{\infty} x^{r-i} \left(\alpha  +\beta x^{-2}\right) e^{(j+1)\alpha x}  dx,
\\
&=& \sum_{k=0}^{\infty}\sum_{j=0}^{\infty}\sum_{i=0}^{\infty} \frac{(-1)^{i+j}(k+1)^{j+1} (j+1)^i \beta ^i \theta (1-\theta )^k}{i! j!} \times
\\
& &
\hspace{2.5cm}
\left[\int_0^{\infty} \alpha x^{r-i} e^{(j+1)\alpha x}  dx  + \int_0^{\infty}\beta x^{r-i-2} e^{(j+1)\alpha x}  dx\right],
\end{eqnarray*}

\noindent
by using the definition of gamma function ( Zwillinger \cite{Zwillinger2014}), in the form,
\[
\Gamma (z)=x^z \int_0^{\infty}t^{z-1} e^{xt} dt, \quad z,x>0.
\]
Finally, we obtain the $r$th moment of MO-FWE distribution in the form
\begin{eqnarray*}
\mu_ r^{'} & = & \sum_{k=0}^{\infty}\sum_{j=0}^{\infty}\sum_{i=0}^{\infty}
\frac{(-1)^{i+j}(k+1)^{j+1} (j+1)^i \beta ^i \theta (1-\theta )^k }{i! j!}  \times
\\
&&
\hspace{2.5cm}
\left[\frac{\Gamma(r-i+1)}{\alpha ^{r-i} (j+1)^{r-i+1}}+\frac{\beta \Gamma(r-i-1)}{\alpha ^{r-i-1} (j+1)^{r-i-1}}\right]
\\
& = & \sum_{k=0}^{\infty}\sum_{j=0}^{\infty}\sum_{i=0}^{\infty}
\frac{(-1)^{i+j}(k+1)^{j+1} \beta ^i \theta (1-\theta )^k }{i! j! (j+1)^{r-2i-1} \alpha^{r-i-1}}  \left[\frac{\Gamma(r-i+1)}{\alpha (j+1)^2}+\beta \Gamma(r-i-1)\right].
\end{eqnarray*}
This completes the proof.

\end{proof}

\section{The Moment Generating Function}
The moment generating function (mgf), $M_X (t)$, of a random variable $X$ provides the basis of an alternative route to analytic results compared with working directly with the pdf and cdf of $X$.
\begin{theorem}
The moment generating function (mgf) of MO-FWE distribution is given by
\begin{equation} \label{eq4.1}
M_X(t) = \sum_{r=0}^\infty \sum_{k=0}^{\infty}\sum_{j=0}^{\infty}\sum_{i=0}^{\infty}
\frac{(-1)^{i+j}(k+1)^{j+1} \beta ^i \theta (1-\theta )^k t^r }{r! i! j! (j+1)^{r-2i-1} \alpha^{r-i-1}}  \left[\frac{\Gamma(r-i+1)}{\alpha (j+1)^2}+\beta \Gamma(r-i-1)\right].
\end{equation}
\end{theorem}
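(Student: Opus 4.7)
The plan is to reduce the computation of $M_X(t)$ to the $r$th moment formula already established in Theorem \ref{Th1}, using nothing more than the Taylor series of the exponential and an interchange of summation with integration.

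First I would start from the definition
\begin{equation*}
M_X(t) = \int_0^\infty e^{tx} f(x;\alpha,\beta,\theta)\, dx,
\end{equation*}
and insert the power series $e^{tx}=\sum_{r=0}^{\infty} \frac{t^r x^r}{r!}$. After interchanging the sum and the integral, the inner integral becomes precisely $\int_0^\infty x^r f(x;\alpha,\beta,\theta)\,dx = \mu_r'$, so that
\begin{equation*}
M_X(t) = \sum_{r=0}^{\infty} \frac{t^r}{r!}\, \mu_r'.
\end{equation*}
At this point the proof is essentially finished: I would substitute the closed-form expression (\ref{eq3.8}) for $\mu_r'$ obtained in Theorem \ref{Th1} and absorb the factor $t^r/r!$ into the quadruple sum, producing exactly the expression (\ref{eq4.1}) claimed in the statement.

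The only nontrivial step is the interchange of the integral with the outer series expansion of $e^{tx}$. In the informal style of this paper I would simply invoke absolute convergence: for $|t|$ in a suitable neighborhood of the origin the partial sums of $\sum_r t^r x^r/r!\,f(x)$ are dominated by $e^{|t|x}f(x)$, which is integrable because the survival function $S(x;\alpha,\beta,\theta)$ decays like a double-exponential and therefore $X$ has a finite mgf on a neighborhood of $0$. This justifies Fubini/Tonelli and legitimizes the swap. The same remark applies implicitly to the triple series expansions used inside the proof of Theorem \ref{Th1}, so no new convergence issue is introduced.

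I would expect the main obstacle to be purely bookkeeping: keeping the four indices $(r,k,j,i)$, the powers of $(-1)$, $(k+1)$, $(j+1)$, $\alpha$, $\beta$ and the gamma factors aligned so that the final expression matches (\ref{eq4.1}) coefficient-for-coefficient. Since Theorem \ref{Th1} already contains the bracketed gamma terms in the exact form needed, this reduces to multiplying through by $t^r/r!$ and relabeling the outer sum, with no further analytic work required.
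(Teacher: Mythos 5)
Your proposal matches the paper's own proof: both start from $M_X(t)=\int_0^\infty e^{tx}f(x)\,dx$, expand $e^{tx}$ in its power series to obtain $M_X(t)=\sum_{r=0}^{\infty}\frac{t^r}{r!}\mu_r'$, and then substitute the $r$th moment formula (\ref{eq3.8}) from Theorem \ref{Th1}. Your added remark justifying the interchange of sum and integral via the double-exponential decay of the tail is a small rigor bonus the paper omits, but the route is the same.
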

\begin{proof}
The moment generating function of the random variable $X$ with probability density function $f(x)$ is given by
\begin{equation} \label{eq4.2}
M_X(t)=\int_0^\infty e^{tx} f(x) dx,
\end{equation}

\noindent
using series expansion of  $e^{tx}$, we obtain
\begin{equation} \label{eq4.3}
M_X(t)=\sum_{r=0}^{\infty} \frac{t^r}{r!} \int_0^\infty x^r f(x) dx= \sum_{r=0}^{\infty} \frac{t^r}{r!} \mu_ r^{'}.
\end{equation}

\noindent
Substituting from Eq. (\ref{eq3.8}) into Eq. (\ref{eq4.3}), we obtain the moment generating function of MO-FWED in the following form

\begin{equation*}
M_X(t) = \sum_{r=0}^\infty \sum_{k=0}^{\infty}\sum_{j=0}^{\infty}\sum_{i=0}^{\infty}
\frac{(-1)^{i+j}(k+1)^{j+1} \beta ^i \theta (1-\theta )^k t^r }{r! i! j! (j+1)^{r-2i-1} \alpha^{r-i-1}}  \left[\frac{\Gamma(r-i+1)}{\alpha (j+1)^2}+\beta \Gamma(r-i-1)\right].
\end{equation*}

\noindent
This completes the proof.
\end{proof}

\section{Order Statistics }

In this section, we derive closed form expressions for the probability density function of the $r$th order statistic of the MO-FWED. Let $X_{1:n}, X_{2:n}, \cdots, X_{n:n}$ denote the order statistics obtained from a random sample $X_1$, $X_2$, $\cdots$, $X_n$ which taken from a continuous population with cumulative distribution function $F(x;\varphi )$ and probability density function $f(x;\varphi )$, then the probability density function of $X_{r:n}$ is given by

\begin{equation} \label{eq5.1}
f_{r:n}(x;\varphi )=\frac{1}{ B(r, n-r+1)}\left[F(x;\varphi)\right]^{r-1} \left[1-F(x;\varphi )\right]^{n-r} f(x;\varphi),
\end{equation}

\noindent
where $f(x;\varphi)$, $F(x;\varphi)$ are the pdf and cdf of MO-FWED $(\alpha, \beta, \theta )$ given by Eqs. (\ref{eq2.2}) and (\ref{eq2.1}) respectively, $\varphi =(\alpha, \beta, \theta)$ and $B(.,.)$ is the beta function, also we define first order statistics $X_{1:n}= \min(X_1, X_2, \cdots, X_n)$, and the last order statistics as $X_{n:n}= \max (X_1, X_2, \cdots, X_n)$. Since $0 < F(x;\varphi )< 1$  for $x>0$, we can use the binomial expansion of $[1-F(x;\varphi )]^{n-r}$ given as follows.
\begin{equation} \label{eq5.2}
\left[1-F(x;\varphi )\right]^{n-r}=\sum_{i=0}^{n-r}\binom{ n-r}{ i}  (-1)^i [F(x;\varphi )]^i.
\end{equation}

\noindent
Substituting from Eq. (\ref{eq5.2}) into Eq. (\ref{eq5.1}), we obtain
\begin{eqnarray} \label{eq5.3}
f_{r:n}(x;\varphi ) & = & \frac{1}{B(r, n-r+1)} f(x; \varphi )\sum_{i=0}^{n-r} \binom{ n-r}{ i}  (-1)^i
\left[ F(x; \varphi) \right]^{i+r-1}
\nonumber\\
& = & \sum_{i=0}^{n-r} \frac{(-1)^i n!}{i! (r-1)! (n-r-i)!} \left[F(x,\varphi)\right]^{i+r-1} f(x;\varphi).
\end{eqnarray}

\noindent
Substituting from Eqs. (\ref{eq2.1}) and  (\ref{eq3.2}) into Eq. (\ref{eq5.3}), we obtain pdf of $rth$ order statistics for MO-FWED($ \alpha, \beta, \theta$).

\noindent
Relation (\ref{eq5.3}), shows that $f_{r:n}(x;\varphi )$ is the weighted average of the Marshall Olkin flexible Weibull extension MO-FWED withe different shape parameters.

\section{Parameters Estimation}

In this section, point and interval estimation of the unknown parameters of the MO-FWED are derived by using the method of maximum likelihood based on a complete sample.

\subsection{Maximum likelihood estimation}
Let $x_1, x_2,\cdots, x_n$ denote a random sample of complete data from the MO-FWED. The likelihood function is given as
\begin{equation} \label{eq6.1}
L = \prod_{i=1}^{n} f(x_i; \alpha, \beta, \theta),
\end{equation}

\noindent
substituting from Eq. (\ref{eq2.2}) into Eq. (\ref{eq6.1}), we have
\begin{equation*}
L = \prod_{i=1}^{n} \theta \left( \alpha + \frac{\beta}{x_i^2} \right ) e^{\alpha x_i -\frac{\beta }{x_i}}
e^{-e^{\alpha x_i -\frac{\beta }{x_i}}} \left [1-(1-\theta ) e^{-e^{\alpha x_i-\frac{\beta }{x_i}}}\right]^{-2}.
\end{equation*}

\noindent
The log-likelihood function is
\begin{equation} \label{eq6.2}
\mathcal{L} = n \ln(\theta  ) + \sum_{i=1}^{n} \ln \left(\alpha + \frac{\beta }{x_i^2}\right) +
\sum_{i=1}^{n} \left(\alpha x_i -\frac{\beta }{x_i}\right) - \sum_{i=1}^{n} e^{\alpha x_i -\frac{\beta }{x_i}}-2 \sum_{i=1}^{n} \ln\left [ 1-(1-\theta )e^{-e^{\alpha x_i-\frac{\beta }{x_i}}} \right ].
\end{equation}

\noindent
The maximum likelihood estimation of the parameters are obtained by differentiating the log-likelihood function, $\mathcal{L}$, with respect to the parameters $\alpha, \beta$ and $\theta$ and setting the result to zero, we have the following normal equations.
\begin{eqnarray} \label{eq6.3}
\frac{\partial \mathcal{L}}{\partial \alpha } & = &
\sum_{i=1}^{n}\frac{ x_i^2}{\beta+\alpha x_i^2 }+\sum_{i=1}^{n} x_i -\sum_{i=1}^{n} x_i e^{\alpha x_i-\frac{\beta }{x_i}}
+2   \sum_{i=1}^{n} \frac { (1-\theta ) x_i e^{\alpha x_i-\frac{\beta }{x_i}}}{1-\theta -e^{e^{\alpha x_i-\frac{\beta }{x_i}}}} = 0,
\\ \label{eq6.4}
\frac{\partial \mathcal{L}}{\partial \beta  } & = &
\sum_{i=1}^{n}\frac{1}{\beta+\alpha x_i^2}-\sum_{i=1}^{n} \frac{1}{x_i} + \sum_{i=1}^{n} \frac{1}{x_i} e^{\alpha x_i-\frac{\beta }{x_i}} - 2  \sum_{i=1}^{n} \frac{ (1-\theta )e^{\alpha x_i-\frac{\beta }{x_i}}}{x_i \left[ 1-\theta -e^{e^{\alpha x_i-\frac{\beta }{x_i}}} \right]} = 0,
\nonumber\\&&
\\ \label{eq6.5}
\frac{\partial \mathcal{L} }{\partial \theta  } &=&
\frac{n}{\theta }+2\sum_{i=1}^{n} \frac{1}{  1-\theta- e^{e^{\alpha x_i-\frac{\beta }{x_i}}} }= 0.
\end{eqnarray}

\noindent
The MLEs can be obtained by solving the nonlinear equations previous, (\ref{eq6.3})--(\ref{eq6.5}), numerically for $\alpha, \beta$ and $\theta$.

\subsection{Asymptotic confidence bounds}

In this section, we derive the asymptotic confidence intervals when $\alpha, \beta >0$ and $\theta  >0$ as the MLEs of the unknown parameters $\alpha, \beta  >0$ and $\theta >0$ can not be obtained in closed forms, by using variance covariance matrix $I^{-1}$ see Lawless \cite{Lawless2003}, where $I^{-1}$ is the inverse of the observed information matrix which defined as follows.
\begin{equation} \label{eq6.6}
\mathbf{I^{-1}} =
\left(
\begin{array}{ccc}
-\frac{\partial ^2 \mathcal{L}}{\partial \alpha  ^2} & -\frac{\partial ^2 \mathcal{L}}{\partial \alpha  \partial \beta } & -\frac{\partial ^2 \mathcal{L}}{\partial \alpha \partial \theta   }
\\
-\frac{\partial ^2 \mathcal{L}}{\partial \beta  \partial \alpha } & -\frac{\partial ^2 \mathcal{L}}{\partial \beta^2} & -\frac{\partial ^2 \mathcal{L}}{\partial \beta \partial \theta  }
\\
-\frac{\partial ^2 \mathcal{L}}{\partial \theta  \partial \alpha } & -\frac{\partial ^2 \mathcal{L}}{\partial \theta  \partial \beta } & -\frac{\partial ^2 \mathcal{L}}{\partial \theta ^2}
\end{array}
\right)^{-1}
 =
\left(
\begin{array}{ccc}
var(\hat{\alpha }) & cov( \hat{\alpha }, \hat{\beta }) & cov( \hat{\alpha }, \hat{ \theta  })
\\
cov( \hat{\beta },\hat{\alpha  }) & var( \hat{\beta }) & cov( \hat{\beta }, \hat{ \theta  })
\\
cov( \hat{ \theta  }, \hat{\alpha }) & cov( \hat{ \theta }, \hat{\beta }) &  var( \hat{ \theta })
\end{array}
\right).
\end{equation}

\noindent
where
\begin{eqnarray} \label{eq6.7}
\frac{\partial ^2 \mathcal{L}}{\partial \alpha^2 } &= &
-\sum_{i=1}^{n}\frac{x_i^4}{\left(\beta+\alpha x_i^2\right)^2} -\sum_{i=1}^{n} x_i^2 e^{\alpha x_i -\frac{\beta }{x_i}} +
2(1-\theta ) \sum_{i=1}^n x_i^2 \mathcal{B}_i
\\ \label{eq6.8}
\frac{\partial ^2 \mathcal{L}}{\partial \alpha \partial \beta } &= &
-\sum_{i=1}^{n}\frac{x_i^2}{\left(\beta+\alpha x_i^2\right)^2} +\sum_{i=1}^{n} e^{\alpha x_i -\frac{\beta }{x_i}}
-2 (1-\theta ) \sum_{i=1}^n \mathcal{B}_i
\\ \label{eq6.9}
\frac{\partial ^2 \mathcal{L}}{\partial \alpha  \partial \theta   } &= &
 2\sum_{i=1}^{n} x_i \mathcal{A}_i  e^{e^{\alpha x_i-\frac{\beta }{x_i}}}
 \\ \label{eq6.10}
\frac{\partial ^2 \mathcal{L}}{\partial \beta^2 } &= &
-\sum_{i=1}^{n}\frac{1}{\left(\beta+\alpha x_i^2\right)^2} -\sum_{i=1}^{n}\frac{1}{x_i^2} e^{\alpha x_i -\frac{\beta }{x_i}}
+ 2 (1-\theta )\sum_{i=1}^n \frac{ \mathcal{B}_i }{x_i^2}
\\ \label{eq6.11}
\frac{\partial ^2 \mathcal{L}}{\partial \beta  \partial \theta   }&= &
-2\sum_{i=1}^{n}\frac{1}{x_i} \mathcal{A}_i e^{e^{\alpha x_i-\frac{\beta }{x_i}}}
 \\ \label{6.12}
\frac{\partial ^2 \mathcal{L}}{\partial \theta  ^2} &=&
-\frac{n}{\theta^2} - 2\sum_{i=1}^{n}\left[ 1-\theta -e^{e^{\alpha x_i-\frac{\beta }{x_i}}} \right]^{-2}.
\end{eqnarray}
where
\[\mathcal{A}_i=e^{\alpha x_i-\frac{\beta }{x_i}}\left[ 1-\theta - e^{e^{\alpha x_i-\frac{\beta }{x_i}}} \right]^{-2}\text{   and  }
\mathcal{B}_i= \mathcal{A}_i \left[ 1-\theta-e^{e^{\alpha x_i-\frac{\beta }{x_i}}} (1-e^{\alpha x_i -\frac{\beta }{x_i}})\right].
\]

\noindent
We can derive the $(1-\delta)100\%$ confidence intervals of the parameters $\alpha, \beta $ and $\theta $, by using variance matrix as in the following forms
$$ \hat{\alpha} \pm Z_{\frac{\delta}{2}}\sqrt{var(\hat{\alpha })},\quad \hat{\beta} \pm Z_{\frac{\delta}{2}}\sqrt{var(\hat{\beta})}, \quad \hat{\theta  } \pm Z_{\frac{\delta}{2}}\sqrt{var(\hat{\theta  })}, $$
where $Z_{\frac{\delta}{2}}$ is the upper $(\frac{\delta}{2})$-th percentile of the standard normal distribution.

\section{Application}
In this section, we present the analysis of two examples for a real data sets using the MO-FWE $(\alpha, \beta, \theta  )$ model and compare it with the other fitted models like a flexible Weibull extension (FWE), Weibull (W), linear failure rate (LFR), exponentiated Weibull (EW), generalized linear failure rate (GLFR), exponentiated flexible Weibull (EFW), modified Weibull (MW), reduced additive Weibull (RAW) and Extended Weibull (EW) distributions  using Kolmogorov Smirnov (K-S) statistic, as well as Akaike Information Criterion (AIC), \cite{Akaike1974}, Akaike Information Citerion with correction (AICC), Bayesian Information Criterion (BIC) and Hannan-Quinn information criterion (HQIC) \cite{Schwarz1978} values.

\noindent
\begin{ex}
Consider the data have been obtained from Aarset \cite{Aarset1987}, and widely reported in many literatures. It represents the lifetimes of 50 devices, and also, possess a bathtub-shaped failure rate property, Table 1.
\end{ex}

\noindent
\begin{center}
Table 1: Lifetime of 50 devices, see Aarset \cite{Aarset1987}.\\
\begin{tabular}{ccccccccccccccccc} \hline
0.1	& 0.2 & 1  & 1	& 1  & 1  & 1  & 2  & 3  & 6 \\
7   & 11  & 12 & 18	& 18 & 18 & 18 & 18 & 21 & 32 \\	
36  & 40  & 45 & 46 & 47 & 50 & 55 & 60 & 63 & 63 \\	
67  & 67  & 67 & 67 & 72 & 75 & 79 & 82 & 82 & 83 \\	
84  & 84  & 84 & 85 & 85 & 85 & 85 & 85 & 86 & 86 & \\	\hline
\end{tabular}
\end{center}

\noindent
Table 2 gives MLEs of parameters of the MO-FWED  and K--S Statistics. The values of the log-likelihood functions, AIC, AICC, BIC and HQIC are presented in Table 3.

\begin{center}
Table 2: MLEs and K--S of parameters for Aarset data \cite{Aarset1987}.\\
\begin{tabular}{llcc} \hline
Model                    & MLE of the parameters  &  K-S & P-value \\ \hline
Flexible Weibull	 & $\hat{\alpha}$ = 0.0122, $\hat{\beta}$ = 0.7002 & 	0.4386   & 4.29 $\times 10^{-9}$  \\
Weibull              & $\hat{\alpha}$ = 0.0223, $\hat{\beta}$ = 0.949 & 0.2397   & 0.0052 \\
Linear Failure rate  &$\hat{a}$ = 0.014, $\hat{b}$ = 2.4 $\times 10^{-4}$ & 	0.1955   & 0.0370 \\
Exponentiated Weibull & $\hat{\alpha}$ = 0.0109, $\hat{\beta}$ = 4.69, $\hat{\gamma}$ = 0.164 & 0.1841  & 0.0590 \\
Generalized Linear Failure rate & $\hat{a}$ = 0.0038, $\hat{b}$ = 3.04 $\times 10^{-4}$, $\hat{c}$ = 0.533  & 	0.1620   & 0.1293 \\
Exponentiated Flexible Weibull & $\hat{\alpha}$ = 0.0147, $\hat{\beta}$ = 0.133, $\hat{\theta}$ = 4.22  & 0.1433 & 0.2617  \\
MO-FWE($\alpha, \beta, \theta $ ) & $\hat{\alpha}$ = 0.017, $\hat{\beta}$ = 0.401, $\hat{\theta  }$ = 9.043 & 0.1269 & 0.3756 \\ \hline
\end{tabular}
\label{tab1}
\end{center}

\vspace{0.4 cm}
\begin{center}
Table 3: Log-likelihood, AIC, AICC, BIC and HQIC values of models fitted for Aarset data \cite{Aarset1987}.\\
\begin{tabular}{lccccccc} \hline
Model	            & $\mathcal{L}$ &-2 $\mathcal{L}$ &	AIC	& AICC	& BIC & HQIC  \\ \hline
FW($\alpha, \beta $)	            & -250.810 & 501.620  & 505.620  & 505.88	 & 509.448 &507.0762  \\
W($\alpha, \beta$ ) 	& -241.002	& 482.004 & 486.004 & 486.26 & 489.828&487.4602 \\
LFR (a, b)            & -238.064 & 476.128 & 480.128 & 480.38 & 483.952&481.5842 \\
EW($\alpha, \beta, \gamma$ ) 	& -235.926	& 471.852 & 477.852 & 478.37 & 483.588& 480.0363\\
GLFR(a, b, c)  & -233.145 & 466.290	& 472.290 & 472.81	& 478.026 &474.4743\\
EFW($\alpha , \beta , \theta$ )  & -226.989& 453.978	& 459.979 & 460.65	& 465.715&462.1623 \\
MO-FWE($\alpha , \beta , \theta $ ) & -223.755 & 447.510 & 453.510 & 454.03 & 459.246 & 455.6943\\ \hline
\end{tabular}
\label{tab2}
\end{center}

\noindent
We find that the MO-FWE distribution with three parameters provides a better fit than the previous models flexible Weibull (FW), Weibull (W), linear failure rate (LFR), exponentiated Weibull (EW), generalized linear failure rate (GLFR) and exponentiated flexible Weibull (EFW). It has the largest likelihood, and the smallest K-S, AIC, AICC, BIC and HQIC values among those considered in this paper.\\

\noindent
Substituting the MLE's of the unknown parameters $ \alpha, \beta$ and $\theta $  into (\ref{eq6.6}), we get estimation of the variance covariance matrix as the following

$$
I^{-1}=\left(
\begin{array}{rrr}
1.523\times 10^{-6}  & -1.782\times 10^{-5}	& 2.177\times 10^{-3} \\
-1.782\times 10^{-5}	& 0.022    & -0.061	 \\
2.177\times 10^{-3} 	& -0.061	& 8.458	    \\

\end{array}
\right)
$$

\noindent
The approximate 95\% two sided confidence intervals of the unknown parameters $\alpha, \beta  $  and $\theta $ are $\left[ 0.015, 0.019\right]$, $\left[ 0.108, 0.694\right]$ and $\left[ 3.343, 14.743\right]$, respectively.\\

\noindent
To show that the likelihood equation have unique solution, we plot the profiles of the log-likelihood function of $\alpha,  \beta $ and $\theta$ in Figures 7, 8.

\begin{center}
\includegraphics[scale=0.45]{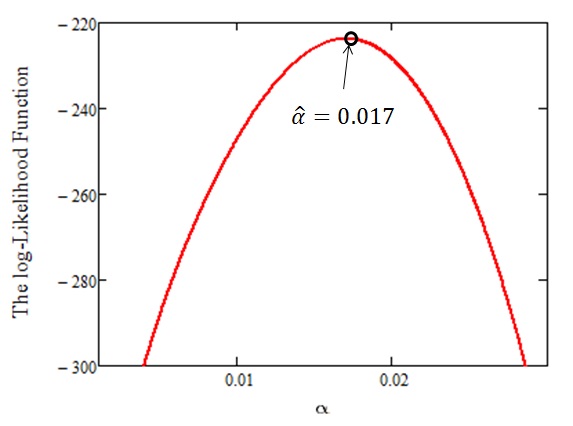}
\includegraphics[scale=0.45]{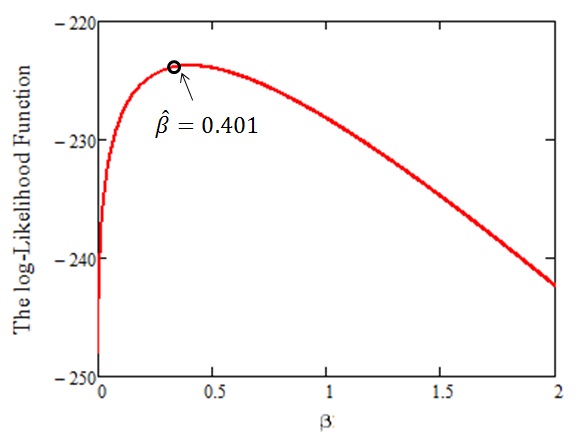}\\
Figure 7: The profile of the log-likelihood function of $\alpha, \beta $ for the Aarset data.
\end{center}

\vspace{0.4 cm}

\begin{center}
\includegraphics[scale=0.55]{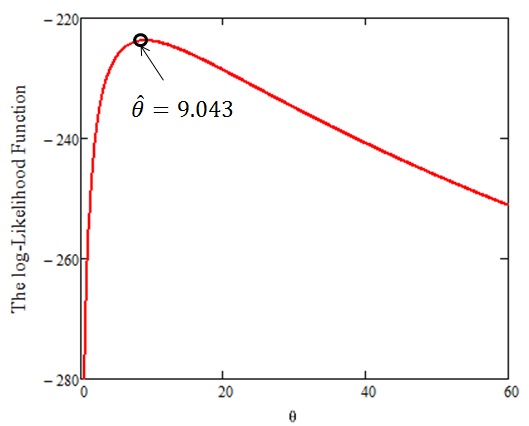}\\
Figure 8: The profile of the log-likelihood function of $ \theta $ for the Aarset data.
\end{center}

\noindent
 The nonparametric estimate of the survival function using the Kaplan-Meier method and its fitted parametric estimations when the distribution is assumed to be MO-FWE, FW, W, LFR, EW, GLFR and EFW are computed and plotted in Figure 9.

\begin{center}
\includegraphics[scale=0.45]{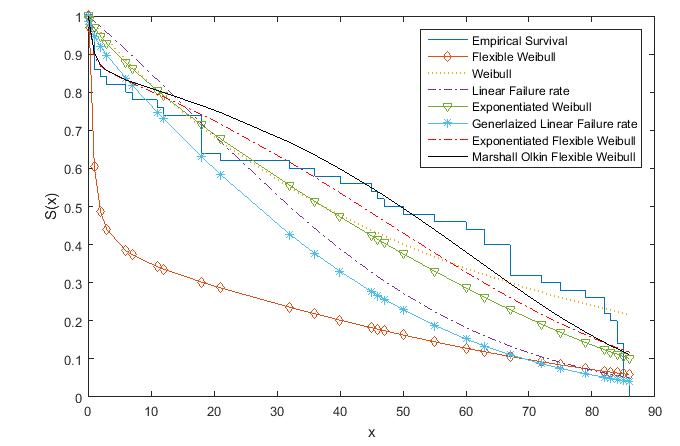}\\
Figure 9: The Kaplan-Meier estimate of the survival function for the Aarset(1987) data.
\end{center}

\noindent
Figures 10 and 11 give the form of the hazard rate and cdf for the MO-FWE, FW, W, LFR, EW, GLFR and EFW  which are used to fit the  Aarset(1987) data after replacing the unknown parameters included in each distribution by their MLE.

\begin{center}
\includegraphics[scale=0.45]{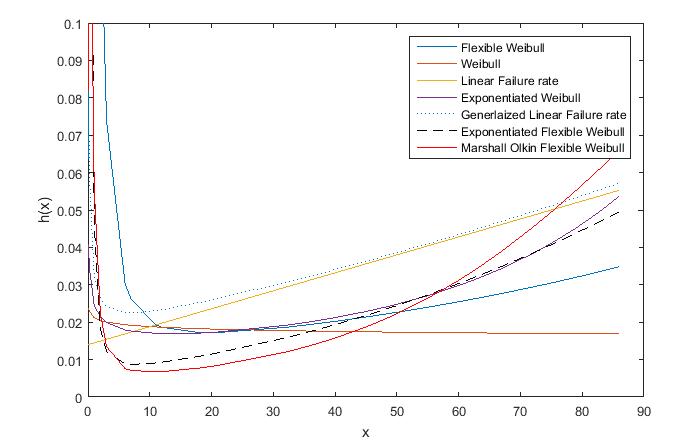}\\
Figure 10: The Fitted hazard rate function for the Aarset(1987) data.
\end{center}

\vspace{0.4 cm}

\begin{center}
\includegraphics[scale=0.45]{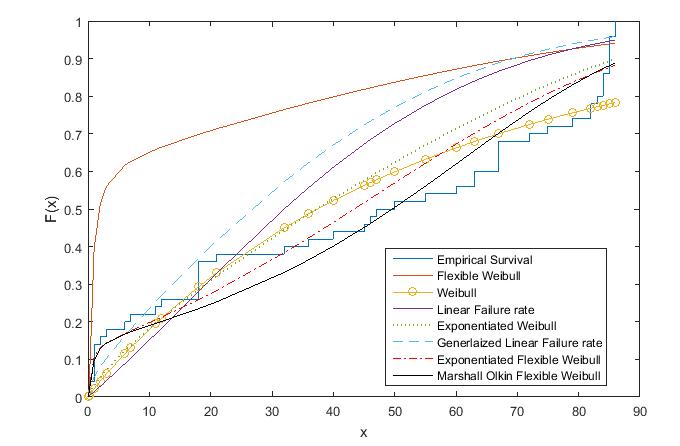}\\
Figure 11: The Fitted cumulative distribution function for the Aarset(1987) data.
\end{center}

\begin{ex}
The data have been obtained from \cite{Salmanetal1999}, it is for the time between failures (thousands of hours) of secondary reactor pumps, Table 4.
\end{ex}

\noindent
\begin{center}
Table 4: Time between failures (thousands of hours) of secondary reactor pumps \cite{Salmanetal1999}\\
\begin{tabular}{ccccccccccccccccc} \hline
2.160	& 0.746   & 0.402   & 0.954	& 0.491 & 6.560	& 4.992  & 0.347 \\
0.150   & 0.358   & 0.101	& 1.359	& 3.465	& 1.060	& 0.614  & 1.921 \\	
4.082   & 0.199	  & 0.605	& 0.273 & 0.070	& 0.062	& 5.320  & \\	\hline
\end{tabular}
\end{center}

\noindent
Table 5 gives MLEs of parameters of the MO-FWE distribution and K-S Statistics. The values of the log-likelihood functions, AIC, AICC, BIC and HQIC are in Table 6.

\begin{center}
Table 5: MLEs and K--S of parameters for secondary reactor pumps.\\
\begin{tabular}{lcccccccc} \hline
Model                    & $\hat{\alpha}$ & $\hat{\beta}$ & $\hat{\theta }$ &  K-S\\ \hline
Flexible Weibull         & 0.0207	      & 2.5875   &  --      & 0.1342\\
Weibull	                 & 0.8077         & 13.9148	 &  --	    & 0.1173\\
Modified Weibull         & 0.1213	      & 0.7924	 &  0.0009	& 0.1188\\
Reduced Additive Weibull & 0.0070	      & 1.7292	 &  0.0452	&  0.1619\\
Extended Weibull 	     & 0.4189	      & 1.0212	 &  10.2778	& 0.1057\\ 
MO-FWE	                 & 0.2160	      & 0.2350	 &	1.2960   & 0.0793 \\ \hline
\end{tabular}
\end{center}

\vspace{0.4 cm}

\begin{center}
Table 6: Log-likelihood, AIC, AICC, BIC and HQIC values of models fitted.\\
\begin{tabular}{lccccccc} \hline
Model	            & $\mathcal{L}$ &-2 $\mathcal{L}$ &	AIC	& AICC	& BIC  & HQIC \\ \hline
Flexible Weibull 	& -83.3424	& 166.6848 & 170.6848 & 171.2848 & 172.95579 &  171.2559\\
Weibull	            & -85.4734	& 170.9468 & 174.9468 & 175.5468 & 177.21779 &  175.5179\\
Modified Weibull 	& -85.4677	& 170.9354 & 176.9354 & 178.1986 & 180.34188 & 	177.7921\\
Reduced Additive Weibull & -86.0728& 172.1456	& 178.1456	    & 179.4088	& 181.55208&  179.0023\\
Extended Weibull 	&  -86.6343 &173.2686  & 179.2686 & 180.5318 & 182.67508 & 	180.1253\\ 
MO-FWE & -30.2110    & 60.4220 & 66.4220  & 67.6852 & 69.8285 & 67.2787 \\ \hline
\end{tabular}
\label{tab4}
\end{center}

\noindent
We find that the MO-FWE distribution with the three-number of parameters provides a better fit than the previous new modified Weibull distributions like a flexible Weibull (FW), Weibull (W), modified Weibull (MW), reduced additive Weibull (RAW) and extended Weibull (EW) distributions. It has the largest likelihood, and the smallest K-S, AIC, AICC, BIC and HQIC values among those considered in this paper.\\

\noindent
Substituting the MLE's of the unknown parameters $ \alpha, \beta $ and $\theta $  into (\ref{eq6.6}), we get estimation of the variance covariance matrix as the following
\[
I_0^{-1}=\left(
\begin{array}{rrr}
1.996\times 10^{-3}  & -7.744\times 10^{-4}	& 8.987\times 10^{-3} \\
-7.744\times 10^{-4}	& 5.487\times 10^{-3}    & -0.022	 \\
8.987\times 10^{-3} 	& -0.022	& 0.326	    \\
\end{array}
\right)
\]

\noindent
The approximate 95\% two sided confidence intervals of the unknown parameters $\alpha, \beta  $  and $\theta $ are $\left[ 0.128, 0.304\right]$, $\left[ 0.09, 0.38\right]$ and $\left[ 0.177, 2.415\right]$, respectively.\\

\noindent
To show that the likelihood equation have unique solution, we plot the profiles of the log-likelihood function of $\alpha,  \beta $ and $\theta$ in Figures 12 and 13.

\begin{center}
\includegraphics[scale=0.45]{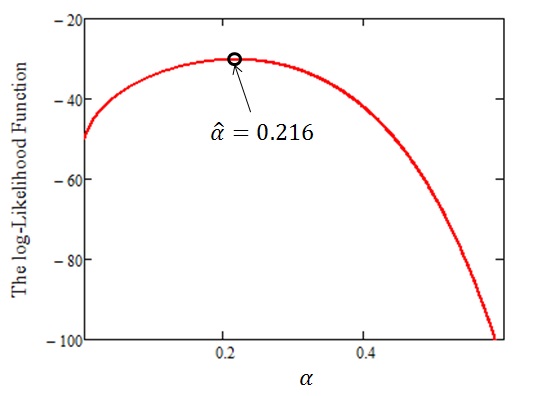}
\includegraphics[scale=0.43]{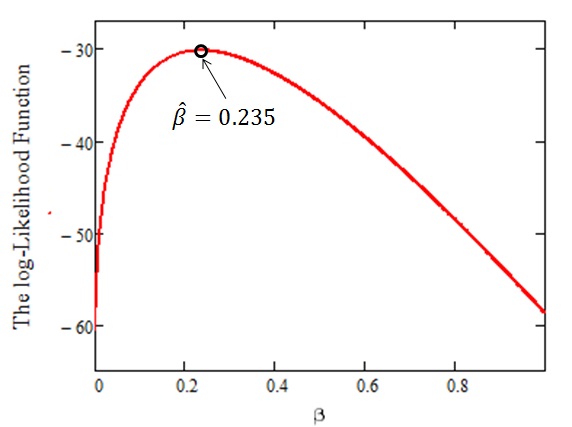}\\
Figure 12: The profile of the log-likelihood function of $\alpha $ and $\beta$.
\end{center}

\vspace{0.4 cm}
\begin{center}
\includegraphics[scale=0.5]{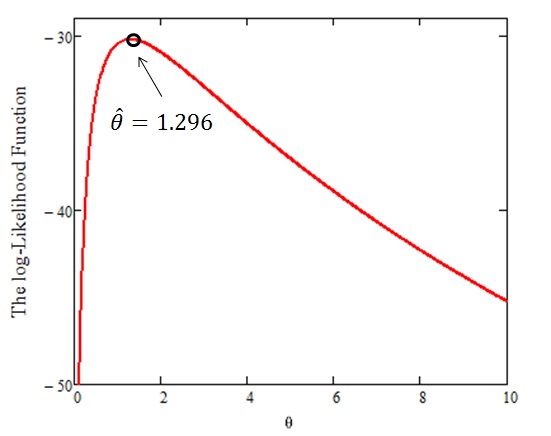}\\
Figure 13: The profile of the log-likelihood function of $\theta$.
\end{center}

\noindent
 The nonparametric estimate of the survival function using the Kaplan-Meier method and its fitted parametric estimations when the distribution is assumed to be MO-FWE, FW, W, MW, RAW and EW are computed and plotted in Figure 14.

\begin{center}
\includegraphics[scale=0.4]{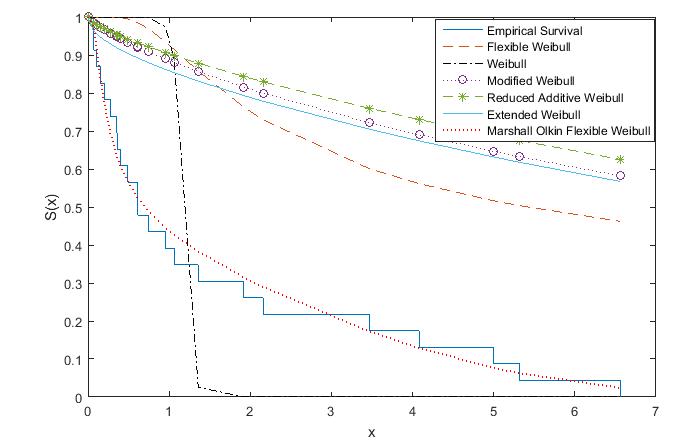}\\
Figure 14: The Kaplan-Meier estimate of the survival function for the data.
\end{center}

\noindent
Figure 15 gives the form of the CDF for the MO-FWE, FW, W, MW, RAW and EW  which are used to fit the data after replacing the unknown parameters included in each distribution by their MLE.

\begin{center}
\includegraphics[scale=0.4]{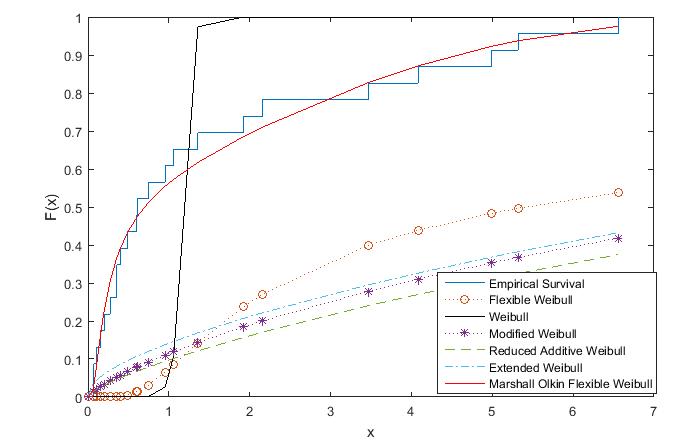}\\
Figure 15: The Fitted cumulative distribution function for the data.
\end{center}

\section{Conclusions}
A new distribution MO-FWE, it's generalized of the flexible Weibull extension distribution based on  the Marshall and Olkin's method, has been proposed and its properties are studied. The idea is to add parameter to a flexible Weibull extension distribution, so that the hazard function is either increasing or more importantly, bathtub shaped. Using  Marshall and Olkin extended family by adding a new parameter to the baseline distribution, the distribution has flexibility to model the second peak in a distribution. We have shown that the Marshall Olkin flexible Weibull extension distribution fits certain well-known data sets better than existing modifications.



\begin{thebibliography}{99}

\bibitem{Aarset1987}
Aarset, M. V. (1987). How to identify bathtub hazard rate. {\em IEEE Transactions on Reliability}, 36, 106--108.

\bibitem{Akaike1974}
Akaike, H. (1974). A new look at the statistical model identification. {\em IEEE Transactions on Automatic Control}, AC-19, 716--23.

\bibitem{Bebbingtonetal2007}
Bebbington, M.S., Lai, C.D. and Zitikis, R. (2007). A flexible Weibull extension. {\em Reliability Engineering \& System Safety }, 92(6), 719--26.

\bibitem{Carrascoetal2008}
Carrasco M., Ortega E.M. and Cordeiro G.M. (2008). A generalized modified Weibull distribution for lifetime modeling. {\em Computational Statistics and Data Analysis}, 53(2), 450--62.

\bibitem{Cordeiroetal2010}
Cordeiro, G.M., Ortega, E.M. and Nadarajah, S. (2010). The Kumaraswamy Weibull distribution with application to failure data. {\em Journal of the Franklin Institute}, 347, 1399--429.


\bibitem{Famoyeetal2005}
Famoye, F., Lee, C. and Olumolade, O. (2005). The beta-Weibull distribution. {\em Journal of Statistical Theory and Applications}, 4(2), 121--36.


\bibitem{KenneyandKeeping1962}
Kenney, J. and Keeping, E. (1962). Mathematics of Statistics, {\em Volume 1, Princeton}.

\bibitem{Laietal2003}
Lai C.D., Xie, M. and Murthy D.N.P. (2003). A modified Weibull distributions. {\em IEEE Transactions on Reliability}, 52(1), 33--7.


\bibitem{Lawless2003}
Lawless, J. F. (2003). Statistical Models and Methods for Lifetime Data. {\em John Wiley and Sons, New York}, 20, 1108--1113.

\bibitem{Moors1998}
Moors, J.J.A. (1998). A quantile alternative for kurtosis. {\em The Statistician},  37, 25--32.

\bibitem{Marshall1997}
Marshall, A.N. and Olkin, I. (1997). A new method for adding a parameter to a family of distributions with applications to the exponential and Weibull families, {\em Biometrika 84 (1997)},  641-–652.


\bibitem{Nadarajahetal2011}
Nadarajah, S., Cordeiro, G.M. and Ortega, E.M.M. (2011). General results for the beta-modified Weibull distribution. {\em Journal of Statistical Computation and Simulation}, 81(10), 1211--32.


\bibitem{Salmanetal1999}
Salman Suprawhardana M, Prayoto, Sangadji. (1999). Total time on test plot analysis for mechanical components of the RSG-GAS reactor. {\em Atom Indones}, 25(2), 155--61.

\bibitem{SarhanandZaindin2009}
Sarhan, A.M. and Zaindin, M. (2009). Modified Weibull distribution. {\em Applied Sciences}, 11, 123--136.

\bibitem{SarhanandApaloo2013}
Sarhan, A.M. and Apaloo, J. (2013). Exponentiated modified Weibull extension distribution. {\em Reliability Engineering and System Safety}, 112, 137--144.

\bibitem{Schwarz1978}
Schwarz, G. (1978). Estimating the dimension of a model. {\em Annals of Statistics}, 6, 461--4.

\bibitem{Silvaetal2010}
Silva, G.O.,Ortega, E.M. and Cordeiro, G.M. (2010). The beta modified Weibull distribution. {\em Lifetime Data Analysis}, 16, 409--30.


\bibitem{XieandLai1995}
Xie, M. and Lai, C.D. (1995). Reliability analysis using an additive Weibull model with bathtub-shaped failure ratefunction. {\em Reliability Engineering System Safety}, 52, 87--93.


\bibitem{Zwillinger2014}
Zwillinger, D. (2014). Table of integrals, series and products. {\em Elsevier}.

\bibitem{Weibull1951}
Weibull, W.A. (1951). Statistical distribution function of wide applicability. {\em Journal of Applied Mechanics}, 18, 293--6.

\end{thebibliography}
\end{document}